\newtheorem{theo}{Th\'eor\`eme}[]
\newtheorem{lem}[theo]{Lemme}
\newtheorem{coro}[theo]{Corollaire} 
\newtheorem{algo}[theo]{Algorithme}
\theoremstyle{definition}
\theoremstyle{remark} 
\newtheorem{rem}[theo]{Remarque}
\theoremstyle{definition}  
\newcommand{\Q}{\overline{\mathbb Q}}
\newcommand{\lambd}{{\boldsymbol{\lambda}}}
\newcommand{\f}{{\bf f}}
\renewcommand{\k}{{\bf{k}}}
\newcommand{\bmu}{{\boldsymbol{\mu}}}
\newcommand{\w}{{\bf{w}}}
\title[]{Méthode de Mahler, transcendance et relations linéaires : aspects effectifs}
\author{Boris Adamczewski} 
\address{
Univ Lyon, Universit\'e Claude Bernard Lyon 1\\
CNRS UMR 5208, Institut Camille Jordan  \\
43 blvd du 11 novembre 1918 \\
F-69622 Villeurbanne Cedex, France}
\email{Boris.Adamczewski@math.cnrs.fr}
\author{Colin Faverjon}
\email{colin.faverjon@ac-creteil.fr}
\thanks{This project has received funding from the European Research Council (ERC) under the European Union's Horizon 2020 research and innovation programme 
under the Grant Agreement No 648132. }
\begin{abstract} Cette note est consacr\'ee aux aspects algorithmiques de la m\'ethode de Mahler. 
Dans un travail récent, nous avons utilisé un résultat de Philippon  
pour montrer qu'étant donnés une fonction 
$q$-mahl\'erienne $f(z)$ appartenant à ${\bf k}\{z\}$, où ${\bf k}$ est un corps de nombres, et un nombre algébrique $\alpha$ dans le domaine d'holomorphie de $f$, le nombre $f(\alpha)$ 
est soit transcendant, soit dans ${\bf k}(\alpha)$.   
Nous décrivons ici un algorithme permettant de trancher cette alternative. 
Plus g\'en\'eralement, \'etant donn\'es  plusieurs fonctions $q$-mahl\'eriennes $f_1(z),\cdots,f_r(z)$ 
et un nombre algébrique $\alpha$ dans le domaine d'holomorphie des $f_i$, 
nous montrons comment calculer explicitement une base de l'espace vectoriel 
des relations de d\'ependance lin\'eaire sur $\Q$ entre les nombres 
$f_1(\alpha),\cdots,f_r(\alpha)$.
\end{abstract}
\begin{document}
\maketitle

\section{Introduction}

Soit $q\geq 2$ un entier. Une fonction  $f(z)\in {\Q}\{z\}$ est  dite {\it $q$-mahl\'erienne}  
s'il existe des polyn\^omes $p_{-1}(z),p_0(z),\ldots , p_n(z)\in {\mathbb Q}[z]$, non tous nuls, 
tels que 
\begin{equation} \label{eq: mahler}
 p_{-1}(z) + p_0(z)f(z)+p_1(z)f(z^q)+\cdots + p_n(z)f(z^{q^n}) \ = \ 0. 
 \end{equation}
 \'Etant donnée une telle équation, on peut toujours se ramener explicitement à une équation 
 pour laquelle $p_0(z)\not=0$ (cf.\ Remarque \ref{rem:p0}), ce que nous supposerons 
 dans toute la suite. 
 Afin d'\'etudier une fonction $q$-mahlérienne, il est souvent commode   
 de considérer un système d'\'equations fonctionnelles de la forme : 

\begin{equation}\label{eq: systeme}
\left( \begin{array}{ c }
     f_1(z) \\
     \vdots \\
     f_n(z)
  \end{array} \right) = A(z)\left( \begin{array}{ c }
     f_1(z^q) \\
     \vdots \\
     f_n(z^q)
  \end{array} \right)  \, ,
\end{equation}
o\`u  $A(z)$ est une matrice de $\rm{Gl}_n(\overline{\mathbb Q}(z))$  et les $f_i$ sont 
des fonctions de la variable $z$, analytiques au voisinage de $z=0$. Rappelons qu'une fonction 
est solution d'une équation de type \eqref{eq: mahler} si, 
et seulement si, elle est la coordonnée d'un vecteur solution d'un système de 
type  \eqref{eq: systeme}. 
La donn\'ee d'une \'equation de type \eqref{eq: mahler} 
ou d'un système de type \eqref{eq: systeme} ne permet pas en général 
de d\'eterminer une solution $f(z)$ 
ou un vecteur solution $(f_1(z),\ldots,f_n(z))$ de façon unique. Pour lever cette ambiguïté, il 
suffit, comme l'a remarqu\'e Dumas \cite{Dumas}, de se donner un nombre suffisant de coefficients du développement de Taylor à l'origine de $f$ 
ou des fonctions $f_i$.  Dans la section \ref{par:solutions}, nous reprenons la d\'emarche de \cite{Dumas}, afin de préciser brièvement ce point. Dans la suite, nous considérerons donc que chaque 
fonction $q$-mahlérienne est donn\'ee avec une équation de type \eqref{eq: mahler} ou  un syst\`eme de type  \eqref{eq: systeme}, ainsi qu'avec un nombre suffisant de coefficients de Taylor pour la d\'eterminer de manière unique. 
Rappelons également les faits classiques suivants (voir par exemple \cite{AB,Dumas,Ra}). 
\begin{itemize}

\medskip

\item[$\bullet$] Une série formelle $f(z)\in {\Q}[[z]]$ solution d'une équation de type  
\eqref{eq: mahler} est toujours convergente au voisinage de l'origine. Elle admet de plus un prolongement méromorphe sur le disque unité 
ouvert, le cercle unité formant une frontière naturelle 
dès lors que $f(z)$ n'est pas une fraction rationnelle. 

\medskip

\item[$\bullet$] Les coefficients de Taylor d'une fonction $q$-mahlérienne    
engendrent une extension finie de $\mathbb Q$.   
\end{itemize}
 
 \medskip

D'après la remarque précédente, on ne perd donc aucune généralité à 
supposer que $f(z)$ appartient à ${\bf k}\{z\}$, où ${\bf k}$ est un corps de nombres.  
 Soient $f(z) \in {\bf k}\{z\}$ une fonction $q$-mahl\'erienne et  
 $\alpha \in {\Q}$ dans le domaine d'holomorphie de $f(z)$. En nous appuyant sur un théorème 
 de Philippon \cite{PPH}, nous avons montr\'e dans \cite{AF} que 
 l'on a toujours l'alternative suivante : soit $f(\alpha)$ est transcendant, 
 soit $f(\alpha) \in {\bf k}(\alpha)$. L'objectif principal de cette note est de montrer que cette alternative 
 peut \^etre tranch\'ee de façon algorithmique.

\begin{algo}
\label{algo: transcendance}
Soient $f(z)$ une fonction $q$-mahl\'erienne donn\'ee par une \'equation de type 
\eqref{eq: mahler} ou un système de type \eqref{eq: systeme}, et $\alpha$, $0<|\alpha|<1$, 
un nombre alg\'ebrique. On peut d\'eterminer de mani\`ere algorithmique si la fonction 
$f$ est d\'efinie au point $\alpha$, et, le cas \'ech\'eant, si $f(\alpha)$ est alg\'ebrique ou 
transcendant.
\end{algo}

L'\'etude de la transcendance des valeurs d'une fonction $q$-mahl\'erienne $f(z)$ 
est un cas particulier de l'\'etude des relations de d\'ependance lin\'eaire entre les valeurs 
de plusieurs fonctions mahl\'eriennes. Nous montrerons plus  généralement le résultat suivant.

\begin{algo}
\label{algo: relationslineaires}
Soient $f_1(z),\cdots,f_r(z)$ des fonctions $q$-mahl\'eriennes, données chacune par 
une \'equation de type \eqref{eq: mahler} ou un système de type \eqref{eq: systeme}, 
et $\alpha$, $0<|\alpha|<1$, un nombre alg\'ebrique. On peut d\'eterminer de mani\`ere 
algorithmique si les fonctions $f_i(z)$ sont toutes d\'efinies au point $\alpha$, et, le cas \'ech\'eant, 
d\'eterminer une base de l'espace vectoriel
$$
\emph{Rel}_{\Q}(f_1(\alpha),\ldots,f_r(\alpha)) := \left\{(\lambda_1,\ldots,\lambda_r)\in {\Q}^r :
\sum_{i=1}^r \lambda_if_i(\alpha) =0\right\}\, .
$$ 
\end{algo}

Afin étudier la nature des valeurs d'une fonction $q$-mahlérienne $f(z)$, il est  
souvent agréable de consid\'erer son équation inhomogène minimale, c'est-à-dire l'\'equation de type 
\eqref{eq: mahler} satisfaite par $f(z)$ pour laquelle l'entier $n$ est minimal. Cette \'equation 
est bien unique (à multiplication par une constante près) si l'on impose aux polyn\^omes $p_i(z)$ 
d'être premiers entre eux. 
Une étape intermédiaire dans l'algorithme \ref{algo: transcendance} consiste à déterminer une telle équation.  

\begin{algo}
\label{algosystemeinhomogene}

Soit $f$ une fonction $q$-malherienne donn\'ee par une équation de type \eqref{eq: mahler} ou un système de type \eqref{eq: systeme}. On peut d\'eterminer explicitement 
l'\'equation inhomog\`ene minimale de $f$.

\end{algo}

Cet algorithme, dont l'intérêt est indépendant, est décrit dans la section \ref{par:relationslin}. 
Les algorithmes \ref{algo: transcendance} et \ref{algo: relationslineaires} sont d\'etaill\'es dans la 
section \ref{par:alternative}.

\section{Détermination des relations lin\'eaires entre fonctions mahl\'eriennes}
\label{par:relationslin}

D\'eterminer les relations alg\'ebriques sur $\Q(z)$ entre des fonctions $q$-mahl\'eriennes 
est a priori une tâche ardue qui relève de la théorie de Galois aux différences 
associée à l'opérateur mahlérien $z\mapsto z^q$. 
Nous remarquions dans \cite[Th\'eor\`eme 6.1]{AF} 
que l'on peut \textit{a contrario} d\'eterminer de mani\`ere effective une base de l'espace 
vectoriel des relations de d\'ependance lin\'eaires sur $\Q(z)$ entre plusieurs fonctions 
$q$-mahl\'eriennes données. 

\begin{algo}
\label{algo:indep}
Soient $f_1(z),\ldots,f_n(z)$ des fonctions $q$-mahl\'eriennes solutions d'un système de type  \eqref{eq: systeme} et \`a coefficients dans un corps de nombre $\k$. 
On peut calculer explicitement la dimension $r$ de l'espace vectoriel engendr\'e sur $\k(z)$  
par les fonctions $f_1(z),\ldots,f_n(z)$, et, pour chaque $r$-uplet de fonctions 
$f_{i_1}(z),\ldots,f_{i_r}(z)$, tester si ces dernières sont lin\'eairement ind\'ependantes sur $\k(z)$. 
Le cas échéant, on peut déterminer un syst\`eme de la forme 
\eqref{eq: systeme} contenant uniquement ces $r$ fonctions.
\end{algo}

\begin{proof}[Description de l'algorithme \ref{algo:indep}]
Notons de mani\`ere compacte $\f(z)$ le vecteur colonne form\'e des fonctions $f_1(z),\ldots,f_n(z)$. 
D\'eveloppant chaque coordonn\'ee en s\'eries enti\`eres, on note
$$
\f(z):=\sum_{i=0}^\infty \f_i z^i \,.
$$
Soient $b(z)$ le plus petit multiple commun des d\'enominateurs des coefficients de $A(z)$, $d$ le maximum des degr\'es des coefficients de la matrice $b(z)A(z)$, et $\nu$ la valuation en $z=0$ du polyn\^ome $b(z)$.  
On pose $h:=4^nd$ et 
$$
M:= \left\lceil \frac{q^{n \left(\frac{qh+d+1}{q-1} + q + 1 \right)}(h+q) + \nu - \frac{h+d}{q-1} }{q-1} \right\rceil \, \cdot
$$ 
On d\'efinit alors la matrice de Sylvester associ\'ee \`a $\f(z)$ par 
\begin{equation*}
{\mathcal S}({\bf f}) := \left(\begin{array}{cccccc} \f_0 & \f_1 & \cdots & \f_h 
& \cdots & \f_M \\ 0 & \f_0 & \ddots & \ddots & \ddots & \f_{M-1} \\ 
\vdots & \ddots & &&& \vdots \\ 0 & \cdots & 0 & \f_0 & \cdots & \f_{M-h} \end{array}\right) \, ,
\end{equation*} 
o\`u $\f_i$ d\'esigne le vecteur colonne de $\k^n$ dont les coordonn\'ees correspondent aux 
$i$-\`emes coefficients des fonctions 
$f_1(z),\ldots,f_n(z)$. On pose 
$$
{\rm ker} {\mathcal S}(\f) := 
\left\{ { \lambd} \in ({\bf k}^n)^{h+1} \mid { \lambd} {\mathcal S}(\f) = 0\right\} \, .
$$
Si $\w=(\w_0,\ldots,\w_h)$ appartient à ${\rm ker} {\mathcal S}({\bf f})$, 
on d\'efinit le vecteur de polynômes 
$$
\w(z):=\sum_{i=0}^h \w_iz^i \,.
$$
Nous avons montr\'e, dans \cite[th\'eor\`eme 6.1]{AF}, qu'alors 
$$
\langle \w(z) , \f(z) \rangle = 0 \,,
$$ 
o\`u $\langle\, ,  \rangle$ d\'esigne le produit scalaire usuel, et que toutes les relations de 
d\'ependance lin\'eaires entre les fonctions $f_i(z)$, $1\leq i \leq n$, s'obtiennent de 
cette mani\`ere l\`a. On peut calculer de mani\`ere explicite la dimension $s$ du noyau  
${\rm ker}{\mathcal S}({\bf f})$, ainsi qu'une base $\w_1(z),\ldots,\w_{s}(z)$ de l'espace vectoriel 
$$
{\rm Rel}_{\bf k(z)}(f_1(z),\ldots,f_n(z)) :=  \left\{ (w_1(z),\ldots,w_n(z)) \in {\bf k}(z)^n : 
 \sum_{i=1}^nw_i(z)f_i(z) = 0 \right\} 
 $$ 
en d\'eterminant une base de  ${\rm ker}{\mathcal S}({\bf f})$. 
La dimension du ${\bf k}(z)$-espace vectoriel engendré par les fonctions 
$f_1(z),\ldots,f_n(z)$ vaut donc $r=n-s$, et la matrice
$$
\Lambda(z) := 
\left(\begin{array}{c} 
\w_1(z) \\
\vdots \\
\w_{s}(z)
\end{array} \right)
$$
est de rang $s$. L'\'etude des mineurs non nuls de cette matrice nous permet de d\'eterminer les parties $\{f_{i_1}(z),\ldots,f_{i_{r}}(z)\}$ form\'ees de fonctions lin\'eairement ind\'ependantes. 
En effet, fixons $i_1,\ldots,i_r$, $1\leq i_1 <\cdots< i_{r} \leq n$, des entiers distincts. 
Notons $J := \{1,\ldots,n\} \setminus \{i_1,\ldots,i_{r}\}$ 
et $\Delta_J$ le mineur de $\Lambda$ associ\'e \`a l'ensemble $J$, 
on montre qu'on a l'\'equivalence 
\begin{equation}\label{eq:equivmineurdependance}
\Delta_J=0 \Leftrightarrow f_{i,1}(z),\ldots,f_{i_{r}}(z) \text{ sont lin\'eairement d\'ependantes sur } {\bf k}(z).
\end{equation}
Supposons que $\Delta_J = 0$. Il existe un vecteur $\bmu(z) \in {\bf k}[z]^s$ non nul tel que $\boldsymbol{\kappa}(z):=\bmu(z)\Lambda(z)$ est nul sur $J$, c'est-à-dire que 
$\boldsymbol{\kappa}(z):=(\kappa_1(z),\ldots,\kappa_n(z))\in {\bf k}[z]^n$ avec $\kappa_i(z)=0$ 
pour tout $i$ dans $J$. D'autre part, la matrice $\Lambda$ étant 
de rang $s$, le vecteur $\boldsymbol{\kappa}(z)$ 
est nécessairement non nul. 
Il vient donc : 
$$
\sum_{l=1}^{r} \kappa_{i_l}(z)f_{i_l}(z)
 = \boldsymbol{\kappa}(z) \left(\begin{array}{c} f_1(z) \\ \vdots \\ f_n(z) \end{array} \right) 
 = \bmu(z) \Lambda(z)\left(\begin{array}{c} f_1(z) \\ \vdots \\ f_n(z) \end{array} \right)
 = 0 \, ,
$$
ce qui montre que les fonctions $f_{i_1}(z),\ldots,f_{i_{r}}(z)$ sont lin\'eairement d\'ependantes 
sur ${\bf k}(z)$.

R\'eciproquement, supposons que les fonctions $f_{i_1}(z),\ldots,f_{i_{r}}(z)$ sont 
lin\'eairement d\'ependantes. Il existe alors un vecteur non nul 
$\boldsymbol{\kappa}(z):=(\kappa_1(z),\ldots,\kappa_n(z))\in {\bf k}[z]^n$ tel que 
$\sum_{i=1}^n\kappa_i(z)f_i(z)=0$ 
et $\kappa_i(z)=0$ pour tout $i$ dans $J$. 
Comme les vecteurs $\w_1(z),\ldots,\w_{s}(z)$ forment une base 
de ${\rm Rel}_{\bf k(z)}(f_1(z),\ldots,f_n(z))$, il existe un vecteur non nul $\bmu(z) \in {\bf k}[z]^s$ 
tel que $\bmu(z)\Lambda(z)=\boldsymbol{\kappa}(z)$. Ainsi $\Delta_J = 0$. 

On peut donc tester algorithmiquement, pour tout choix d'entiers $i_1,\ldots,i_r$, si les $r$ fonctions $f_{i_1}(z),\ldots,f_{i_{r}}(z)$ sont lin\'eairement ind\'ependantes. 
Supposons à présent que ce soit le cas. 
Quitte \`a renum\'eroter, on peut supposer sans perte de généralité qu'il s'agit des fonctions 
$f_1(z),\ldots,f_{r}(z)$. Consid\'erons alors la matrice 
$$
S(z) := 
\left(\begin{array}{ccccccc} 
1      & 0      & \cdots & \cdots & \cdots & \cdots & 0 \\
0      & 1      & \ddots &        &        &        & \vdots \\
\vdots & \ddots & \ddots & \ddots &        &        & \vdots \\
0      & \cdots & 0      & 1      & 0      & \cdots & 0 \\
\hline
       &        &        &        &        &        & \\
       &        &        & \Lambda(z) &    &        & \\
	   &			&		&		&		 &		  &
\end{array}\right).
$$
D'apr\`es \eqref{eq:equivmineurdependance} cette matrice est inversible, et on a 
$$
S(z) \left(\begin{array}{c} f_1(z) \\ \vdots \\ f_n(z) \end{array} \right)
=\left(\begin{array}{c} f_1(z) \\ \vdots \\ f_{r}(z) \\ 0 \\ \vdots \\ 0 \end{array} \right).
$$
Notons $B(z)$ le bloc principal $r\times r$ de la matrice $S(z) A(z)  S(z^q)^{-1}$. 
On obtient par construction que 
\begin{equation}
 \label{eq:malherreduite}
 \left(\begin{array}{c} f_1(z) \\ \vdots \\ f_{r}(z) \end{array} \right) 
 = B(z)  \left(\begin{array}{c} f_1(z^q) \\ \vdots \\ f_{r}(z^q) \end{array} \right).
\end{equation}
D'autre part, les fonctions $f_1(z),\ldots,f_{r}(z)$ \'etant lin\'eairement ind\'ependantes, la matrice $B(z)$ est inversible, et le syst\`eme \eqref{eq:malherreduite} est bien un système de type \eqref{eq: systeme}.
\end{proof}

\subsection{Détermination de l'équation inhomogène minimale d'une fonction mahlérienne}

Soit $f(z)$ une fonction $q$-mahl\'erienne d\'efinie par une \'equation de type \eqref{eq: mahler}. L'espace vectoriel 
$$
V_{f}:={\rm Vect}_{\Q (z)}\{1,f(z),f(z^q),f(z^{q^2}),\cdots \}\,.
$$
est donc de dimension finie. Le lemme suivant montre que la dimension de $V_f$ 
est déterminée par l'ordre de l'\'equation inhomog\`ene minimale de $f$.

\begin{lem}
\label{lemme: dimension}
Soit $f(z)$ une fonction $q$-malherienne d'\'equation inhomog\`ene minimale
\begin{equation}
\label{eq: inhomogeneminimale}
 p_{-1}(z) + p_0(z)f(z)+p_1(z)f(z^q)+\cdots + p_n(z)f(z^{q^n}) \ = \ 0\,.
\end{equation}
La dimension de l'espace $V_{f}$ est exactement $n+1$.
\end{lem}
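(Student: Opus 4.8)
Le plan est de relier l'ordre $n$ de l'\'equation inhomog\`ene minimale \`a la dimension de $V_f$ par l'interm\'ediaire de l'entier
$$
N := \min\left\{ k \geq 0 \;:\; f(z^{q^k}) \in \mathrm{Vect}_{\Q(z)}\{1, f(z), \ldots, f(z^{q^{k-1}})\}\right\},
$$
qui est bien d\'efini car $V_f$ est de dimension finie, et de d\'emontrer que $N = n$ et $\dim V_f = N+1$. On peut supposer $N \geq 1$, le cas $N = 0$ (o\`u $f$ est une fraction rationnelle) \'etant imm\'ediat.

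Pour la majoration, l'\'equation minimale \eqref{eq: inhomogeneminimale} a un coefficient de t\^ete $p_n(z)$ non nul --- sinon $f$ v\'erifierait une \'equation de type \eqref{eq: mahler} d'ordre $<n$ --- de sorte que $f(z^{q^n})$ s'exprime comme combinaison $\Q(z)$-lin\'eaire de $1, f(z), \ldots, f(z^{q^{n-1}})$ ; ceci montre d\'ej\`a que $N \leq n$. Par d\'efinition de $N$, on a $f(z^{q^N}) \in W := \mathrm{Vect}_{\Q(z)}\{1, f(z), \ldots, f(z^{q^{N-1}})\}$. En appliquant \`a cette relation la substitution $z \mapsto z^q$ de fa\c{c}on it\'er\'ee, et en remarquant que $z \mapsto z^q$ envoie $W$ dans lui-m\^eme, on obtient par r\'ecurrence que $f(z^{q^k}) \in W$ pour tout $k \geq 0$, d'o\`u $V_f = W$. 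Comme les $N+1$ fonctions $1, f(z), \ldots, f(z^{q^{N-1}})$ sont de plus lin\'eairement ind\'ependantes (c'est exactement la minimalit\'e de $N$), il vient $\dim V_f = N+1$.

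Le point crucial est l'in\'egalit\'e inverse $n \leq N$. La relation de d\'ependance apparaissant au rang $N$ s'\'ecrit
$$
c_{-1}(z) + c_0(z) f(z) + c_1(z) f(z^q) + \cdots + c_{N-1}(z) f(z^{q^{N-1}}) = f(z^{q^N}),
$$
avec des $c_i(z) \in \Q(z)$. Apr\`es r\'eduction au m\^eme d\'enominateur, c'est une relation de type \eqref{eq: mahler} d'ordre $N$, pourvu toutefois que le coefficient $c_0(z)$ de $f(z)$ soit non nul : c'est l\`a l'unique point d\'elicat. Je raisonnerais par l'absurde en supposant $c_0(z) = 0$. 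La fonction $f(z^{q^N})$ s'exprimerait alors \`a l'aide des seules $1, f(z^q), \ldots, f(z^{q^{N-1}})$ ; en propageant \`a nouveau par $z \mapsto z^q$, toutes les it\'er\'ees $f(z^{q^k})$ avec $k \geq 1$ appartiendraient \`a $U := \mathrm{Vect}_{\Q(z)}\{1, f(z^q), \ldots, f(z^{q^{N-1}})\}$, et l'on aurait donc $\mathrm{Vect}_{\Q(z)}\{1, f(z^q), f(z^{q^2}), \ldots\} = U$. Or l'existence pour $f$ d'une \'equation de type \eqref{eq: mahler} (de coefficient $p_0$ non nul, cf.\ Remarque \ref{rem:p0}, et en particulier l'\'equation inhomog\`ene minimale elle-m\^eme) assure que $f(z) \in \mathrm{Vect}_{\Q(z)}\{1, f(z^q), f(z^{q^2}), \ldots\}$. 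On aurait ainsi $f(z) \in U$, ce qui rendrait $\{1, f(z), f(z^q), \ldots, f(z^{q^{N-1}})\}$ li\'e et contredirait la minimalit\'e de $N$. Par cons\'equent $c_0(z) \neq 0$, la relation au rang $N$ est bien de type \eqref{eq: mahler}, et $n \leq N$.

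En combinant les deux in\'egalit\'es on obtient $n = N$, puis $\dim V_f = N+1 = n+1$. L'obstacle principal est l'\'etape pr\'ec\'edente : toute la subtilit\'e consiste \`a s'assurer que la premi\`ere relation de d\'ependance non triviale fait effectivement intervenir $f(z)$, ce que garantit l'existence, pour $f$, d'une \'equation fonctionnelle de coefficient $p_0(z)$ non nul.
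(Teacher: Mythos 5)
Your proof is correct, and it rests on the same underlying mechanism as the paper's, though it resolves the key technical point by a different argument. The paper works with the nested chain $V_m := {\rm Vect}_{\Q(z)}\{1,f(z),\ldots,f(z^{q^m})\}$ and proves a stationarity lemma (once $V_{m_0+1}=V_{m_0}$, the chain is constant); your propagation of the first dependence relation under the semilinear substitution $z\mapsto z^q$ is exactly that lemma, your $N$ being the first collapse index. Your inequality $N\le n$ is the paper's observation that \eqref{eq: inhomogeneminimale} forces $V_n=V_{n-1}$, and your inequality $n\le N$ is the paper's claim that minimality forces $V_m\subsetneq V_{m+1}$ for $m<n-1$. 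The genuine divergence is at the point you rightly single out as delicate: why the first relation $f(z^{q^N})=c_{-1}(z)+c_0(z)f(z)+\cdots+c_{N-1}(z)f(z^{q^{N-1}})$ yields an admissible equation of type \eqref{eq: mahler}, i.e.\ one with nonzero coefficient of $f(z)$. The paper dispatches this implicitly: under its standing convention such equations have $p_0\neq 0$, and Remarque \ref{rem:p0} converts any relation with $c_0=0$ into one of \emph{strictly smaller} order with nonzero constant-depth coefficient, so minimality is contradicted either way. You instead argue by contradiction without that extraction: assuming $c_0=0$, you trap all the $f(z^{q^k})$, $k\ge 1$, in $U={\rm Vect}_{\Q(z)}\{1,f(z^q),\ldots,f(z^{q^{N-1}})\}$ (stable under $z\mapsto z^q$ precisely because of the assumed relation), then use the minimal equation itself, whose $p_0$ is nonzero, to force $f(z)\in U$, contradicting the independence of $1,f(z),\ldots,f(z^{q^{N-1}})$ that follows from the minimality of $N$. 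Both resolutions are valid: the paper's is shorter given Remarque \ref{rem:p0} and is explicitly algorithmic (the modulo-$q^j$ extraction is effective, which matters for the paper's aims), while yours is self-contained, avoids the extraction entirely, and makes the hidden hypothesis $p_0\neq 0$ do visible work. Your treatment of the side cases ($p_n\neq 0$ for $N\le n$, the independence of the first $N+1$ functions via the largest index with nonzero coefficient, and the rational case $N=0$) is also sound.
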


\begin{proof}
Notons $(V_m)_{m\geq 1}$ la suite d'espaces embo\^it\'es 
$$
V_m := {\rm Vect}_{\Q (z)}\{1,f(z),f(z^q),f(z^{q^2}),\ldots, f(z^{q^m})\}\,.
$$
Nous montrons que si $V_{m_0+1} = V_{m_0}$ pour un $m_0$ fix\'e, alors $V_m = V_{m_0}$ 
pour tout $m \geq m_0$. Il suffit en fait de montrer que dans un tel cas, 
$V_{m_0+2} = V_{m_0+1}$. 

Soit $m_0$ un entier tel que $V_{m_0+1} = V_{m_0}$. On peut trouver des fractions rationnelles $q_{-1}(z),\ldots q_{m_0}(z)$, telles que 
$$
f(z^{m_0+1}) = q_{-1}(z)+q_0(z)f(z)+ \cdots + q_{m_0}(z)f(z^{q^{m_0}})\,.
$$
En appliquant cette \'egalit\'e en $z^q$, on trouve
$$
f(z^{m_0+2}) = q_{-1}(z^q)+q_0(z^q)f(z^q)+ \cdots + q_{m_0}(z^q)f(z^{q^{m_0+1}}) \in V_{m_0+1}\,.
$$
On en d\'eduit que $V_{m_0+2} = V_{m_0+1}$.

Par minimalit\'e de l'\'equation \eqref{eq: inhomogeneminimale},  on a $V_m \subsetneq V_{m+1}$ 
pour tout $m<n-1$. D'autre part, l'\'equation \eqref{eq: inhomogeneminimale} implique l'\'egalit\'e d'espaces vectoriels, $V_{n}=V_{n-1}$. On a donc, par stationnarit\'e, $V_{f} = V_{n-1}$, et 
$\dim V_{n-1} = n+1$, ce qui termine la preuve du lemme.
\end{proof}

\begin{rem}\label{rem:p0}
Soit $f(z)$ solution d'une équation de la forme
$$
p_{-1}(z)+p_0(z)f(z) + \cdots + p_{n}(z)f(z^{q^n}) = 0\ ,
$$
où les $p_i$ sont des polynômes non tous nuls.  
Si le polynôme $p_0(z)$ est nul, on peut déterminer explicitement une équation 
$$
\tilde p_{-1}(z)+\tilde p_0(z)f(z) + \cdots + \tilde p_{n-j}(z)f(z^{q^{n-j}}) = 0\ 
$$ 
telle que $\tilde p_0(z)\not=0$, où $j$ désigne le plus petit entier positif tel que $p_j(z)\not=0$.  
En effet, en décomposant les $p_i$ selon les puissances de $z$ modulo $q^j$, 
on obtient 
$$
p_i(z) := \sum_{k=0}^{q^j-1} p_{i,k}(z^{q^j})z^k 
$$
et l'équation 
\begin{equation*}
p_{-1}(z)+ p_{j}(z)f(z^{q^j}) + \cdots + p_{n}(z)f(z^{q^n}) = 0
\end{equation*}
implique que 
$$
p_{-1,k}(z)+ p_{j,k}(z)f(z) + \cdots + p_{n,k}(z)f(z^{q^{n-j}}) = 0
$$ 
pour tout $k$ tel que $0\leq k< q^j$. 
Comme $p_j(z)\not=0$, il existe au moins un entier $k_0$ tel que $p_{j,k_0}(z)\not=0$ et il suffit de poser $\tilde p_{-1}(z)=p_{-1,k_0}(z)$ et $\tilde p_{i}(z)=p_{i+j,k_0}(z)$ pour $0\leq i\leq n-j$. 
\end{rem}

Nous sommes à présent en mesure de décrire l'algorithme \ref{algosystemeinhomogene}.

\begin{proof}[Description de l'algorithme \ref{algosystemeinhomogene}]
Nous pouvons supposer sans perte de généralité que $f$ est donnée comme coordonnée d'un 
système de type \eqref{eq: systeme}. En effet, si $f$ est donn\'ee comme solution 
d'une \'equation de type \eqref{eq: mahler}, il suffit de 
considérer le système compagnon suivant :  
$$
\left(\begin{array}{c}1 \\ f(z) \\ \vdots \\ f(z^{q^{n-1}}) \end{array} \right)
= \left(\begin{array}{ccccc}
 1 & 0 & \cdots& \cdots & 0  
 \\ - \frac{p_{-1}(z)}{p_0(z)} & \frac{p_{1}(z)}{p_0(z)} & \cdots & \cdots & \frac{p_{n}(z)}{p_0(z)}
 \\ 0 & 1 & 0 & \cdots & 0
 \\ \vdots & \ddots & \ddots & \ddots & \vdots 
 \\ 0 & \cdots & \cdots & 1 & 0
\end{array} \right)
\left(\begin{array}{c}1 \\ f(z^q) \\ \vdots \\ f(z^{q^{n}}) \end{array} \right) \,.
$$

On suppose donc que $f(z)$ est donn\'ee comme coordonnée d'un vecteur solution d'un 
syst\`eme de type \eqref{eq: systeme}. On note $(f_1(z),\ldots,f_n(z))$ un tel 
vecteur solution. Quitte \`a effectuer des permutations sur la matrice $A(z)$ et éventuellement à ajouter la fonction identiquement \'egale \`a $1$, 
en changeant la matrice $A(z)$ en
$$
\left(\begin{array}{ccc|c} & & & 0 \\ & A(z) & & \vdots \\ & & & 0 \\ \hline 
0 & \cdots & 0 & 1 \end{array}\right) \, ,
$$
on supposera désormais que $f_1(z)=f(z)$ et $f_n(z) = 1$. 
 
Le th\'eor\`eme 6.1 de \cite{AF} permet de d\'eterminer si la fonction $f(z)$ est rationnelle 
ou non, et le cas échéant, de trouver deux polynôme $A$ et $B$ premiers entre eux 
tels que $f=A/B$.  
L'équation minimale inhomogène est alors $A(z) - B(z)f(z)=0$. 

On supposera donc à présent que $f$ n'est pas une fraction rationnelle. Dans ce cas, 
$f$ est nécessairement transcendante sur ${\bf k}(z)$, de sorte que $1$ et $f$ sont linéairement indépendantes. En appliquant l'algorithme \ref{algo:indep}, on trouve une matrice $B(z)$ inversible, et des fonctions $g_1(z):=f(z),g_2(z),\ldots,g_s(z),g_{s+1}(z):=1$ lin\'eairement ind\'ependantes 
sur ${\bf k}(z)$ telles que
\begin{equation}
\label{eq:systemeintermediaire}
 \left(\begin{array}{c} g_1(z) \\ \vdots \\ g_{s+1}(z)\end{array} \right) 
 := B(z) \left(\begin{array}{c} g_1(z^q) \\ \vdots \\ g_{s+1}(z^q)\end{array} \right)\,.
\end{equation}
En inversant le syst\`eme, on obtient $f(z^q)$ en fonction de $g_1(z),\ldots,g_{s+1}(z)$,
$$
f(z^q)=g_1(z^q)=\lambd_1(z) \left(\begin{array}{c} g_1(z) \\ \vdots \\ 
g_{s+1}(z)\end{array} \right). 
$$
En it\'erant le syst\`eme \eqref{eq:systemeintermediaire} $s$ fois, et en inversant les matrices $B_l(z):=B(z)B(z^q) \cdots B(z^{q^{l-1}})$, 
on trouve, pour chaque $l \leq s$, un vecteur $\lambd_l(z)$ tel que
$$
f(z^{q^l})=g_1(z^{q^l})=\lambd_l(z)
\left(\begin{array}{c} g_1(z) \\ \vdots \\ g_{s+1}(z)\end{array} \right)\,. 
$$
On notera aussi, $\lambd_{0}:=(0,\ldots,0,1)$ de sorte que 
$$
1=\lambd_{0} \left(\begin{array}{c} g_1(z) \\ \vdots \\ g_{s+1}(z)\end{array} \right)\,.
$$
Consid\'erons alors la matrice $C(z)$, dont les lignes sont les vecteurs 
$\lambd_l(z)$, $l=0\ldots s$. On a 
$$
  \left(\begin{array}{c}1 \\ f(z^q) \\ \vdots \\ f(z^{q^s})\end{array}\right) 
= C(z)\left(\begin{array}{c} g_1(z) \\ \vdots \\ g_{s+1}(z)\end{array} \right)\,.
$$
Les fonctions $g_1(z),\ldots,g_{s+1}(z)$ \'etant lin\'eairement ind\'ependantes, 
le rang $r$ de $C(z)$ est \'egal \`a la dimension de l'espace engendr\'e sur ${\Q}(z)$ 
par les fonctions $1,f(z^q),\ldots,f(z^{q^s})$, c'est \`a dire, de l'espace $V_{g} \subset V_{f}$ où $g(z):=f(z^q)$. Ainsi $\dim V_{f}\geq r$. Montrons 
maintenant que  $\dim V_{f}=r$.  
En effet, d'apr\`es le lemme \ref{lemme: dimension}, il existe $r+1$ polyn\^omes 
$q_{-1}(z),q_1(z),\ldots,q_{r}(z)$ tels que 
$$
q_{-1}(z)+q_1(z)f(z^q) + \cdots + q_{r}(z)f(z^{q^{r}}) = 0\ .
$$
Comme dans la remarque \ref{rem:p0}, en ne regardant que les puissances de $z$ 
selon leur reste modulo $q$, on extrait  
$r$ polyn\^omes $\tilde{q}_{-1}(z),\cdots,\tilde{q}_{r-1}(z)$ tels que
\begin{equation}
\label{eq: reduction}
\tilde{q}_{-1}(z)+\tilde{q}_{0}(z)f(z) + \cdots + \tilde{q}_{r-1}(z)f(z^{q^{r-1}}) = 0\ 
\end{equation}
et $\tilde{q}_{0}(z)\not=0$. D'après le lemme \ref{lemme: dimension}, il vient  
$\dim V_{f}\leq r$ et donc $\dim V_f = r$.  
L'entier $r$ se calcule explicitement puisque 
c'est simplement le rang de la matrice $C(z)$. 

L'\'equation \eqref{eq: reduction} donne alors l'existence d'un vecteur non nul 
$
\bmu(z):=(\mu_1(z),\ldots,\mu_{r}(z),0,\ldots,0)
$ 
tel que
$$
\bmu(z)
  \left(\begin{array}{c}1 \\ f(z^q) \\ \vdots \\ f(z^{q^{s}})\end{array}\right) 
  = f(z) \ \left(= g_1(z) \right)\,.
$$
L'ind\'ependance lin\'eaire des fonctions $g_1(z),\ldots,g_{s+1}(z)$ entra\^ine que 
$$
\bmu(z)  C(z) = (1,0,\ldots,0)\,.
$$
Un tel vecteur $\bmu(z)$ se calcule donc explicitement en résolvant un syst\`eme lin\'eaire. 
On obtient alors : 
$$
\mu_1(z) + \mu_2(z) f(z^q) + \cdots + \mu_r(z) f(z^{q^{r-1}}) = f(z)\,.
$$
On note $p_0(z)$ le ppcm des dénominateurs des fractions rationnelles $\mu_i(z)$,  
puis on pose : 
$$
p_{-1}(z):= -p_0(z)\mu_1(z) \; \text{ et } \; p_i(z) := -p_0(z)\mu_{i-1}(z)\ ,\ 
\text{ pour } i\in \{1, \ldots,r\}\,.
$$
En vertu du lemme \ref{lemme: dimension}, l'\'equation
$$
p_{-1}(z) + p_0(z)f(z)+p_1(z)f(z^q)+\cdots + p_{r-1}(z)f(z^{q^{r-1}}) \ = \ 0
$$
est bien l'\'equation inhomog\`ene minimale de $f(z)$.
\end{proof}

\begin{rem}
Comme dans le cas différentiel, la notion d'\'equation minimale 
est vraiment relative à une solution donnée. 
L'exemple suivant illustre ce fait en exhibant une équation mahlérienne 
qui est minimale par rapport à l'une de ses solutions, 
tout en admettant également une solution qui est une 
fraction rationnelle.
Consid\'erons l'\'equation fonctionnelle 
\begin{equation}
\label{exemple} zf(z) -(1+2z) f(z^2) + (1+z ) f(z^4) = 0\,.
\end{equation}
On voit rapidement que la fonction identiquement \'egale \`a $1$ est solution. D'autre part, l'\'etude des relations entre les coefficients montre qu'il existe une solution analytique dont les premiers coefficients sont
$$ f(z) = z + 2z^2+z^3+3z^4+2z^5+2z^6 +z^7+4 z^8+ \cdots.$$
Nous allons voir que \eqref{exemple} est l'équation inhomogène minimale de la fonction $f(z)$ (bien qu'elle possède un terme constant nul). \'Ecrivons le système inhomogène compagnon associé à cette équation :
$$
\left(\begin{array}{c}1 \\ f(z) \\ f(z^2) \end{array} \right)
=
\left(\begin{array}{ccc}
1 & 0&0 
\\ 0 & \frac{1+2z}{z} & -\frac{1+z}{z}
\\0 &1 & 0
\end{array}\right)
\left(\begin{array}{c}1 \\ f(z^2) \\ f(z^4) \end{array} \right)\, .
$$
Comme dans la description de l'algorithme \ref{algo:indep}, le théorème \cite[Théorème 6.2]{AF} montre que les fonctions $1$, $f(z)$ et $f(z^2)$ sont linéairement indépendantes 
si et seulement si la matrice de Sylvester
$$
\mathcal S
:=\left(\begin{array}{ccccccccccc}
1&0&0&0&0&0&0&0&0&\cdots
\\
0&1&2&1&3&2&2&1&4&\cdots
\\
0&0&1&0&2&0&1&0&3&\cdots
\\
0&1&0&0&0&0&0&0&0&\cdots
\\
0&0&1&2&1&3&2&2&1&\cdots
\\
0&0&0&1&0&2&0&1&0&\cdots
\\
0&0&1&0&0&0&0&0&0&\cdots
\\
0&0&0&1&2&1&3&2&2&\cdots
\\
0&0&0&0&1&0&2&0&1&\cdots
\end{array}
\right)\, $$
est de rang maximal. Le d\'eterminant de la matrice form\'ee par les $9$ premi\`eres colonnes vaut $1$. Les fonctions
$1$, $f(z)$ et $f(z^2)$ sont donc lin\'eairement ind\'ependantes et l'\'equation \eqref{exemple} est bien l'\'equation inhomog\`ene minimale associ\'ee \`a la fonction $f(z)$. 
\end{rem}

\section{Description des algorithmes \ref{algo: transcendance} et \ref{algo: relationslineaires}}
\label{par:alternative}


L'algorithme \ref{algo: transcendance} est bien sûr un cas particulier de l'algorithme \ref{algo: relationslineaires}, mais, pour la clart\'e de l'exposition, il nous semble préférable de pr\'esenter ces deux algorithmes de mani\`ere distincte. Nous commençons par décrire l'algorithme \ref{algo: transcendance}. 

\begin{proof}[Description de l'algorithme \ref{algo: transcendance}]
Une fonction $q$-mahl\'erienne $f(z)$ étant donn\'ee par une équation de type \eqref{eq: mahler} ou un système de type \eqref{eq: systeme}, on applique d'abord l'algorithme \ref{algosystemeinhomogene} pour d\'eterminer l'\'equation  inhomog\`ene minimale associ\'ee \`a la fonction $f$. 
En \'ecrivant cette \'equation sous forme d'un système, on obtient : 
\begin{equation}
\label{eq: systememinf}
\begin{array}{rcl}
\left(\begin{array}{c}1 \\ f(z) \\ \vdots \\ \vdots \\ f(z^{q^{n-1}}) \end{array} \right)
 & =  &\left(\begin{array}{ccccc}
 1 & 0 & \cdots& \cdots & 0  
 \\ - \frac{p_{-1}(z)}{p_0(z)} & \frac{p_{1}(z)}{p_0(z)} & \cdots & \cdots & \frac{p_{n}(z)}{p_0(z)}
 \\ 0 & 1 & 0 & \cdots & 0
 \\ \vdots & \ddots & \ddots & \ddots & \vdots 
 \\ 0 & \cdots & \cdots & 1 & 0
\end{array} \right)
\left(\begin{array}{c}1 \\ f(z^q) \\ \vdots \\ \vdots \\ f(z^{q^{n}}) \end{array} \right) \\ 
&:= &A(z) \left(\begin{array}{c}1 \\ f(z^q) \\ \vdots \\ f(z^{q^{n}}) \end{array} \right)\ . \end{array}
\end{equation}
Soit $\rho$, $0<\rho<1$, un nombre réel strictement inférieur aux modules des p\^oles non nuls des coefficients de $A(z)$ et des racines non nulles du d\'eterminant de $A(z)$. 
La matrice $A(z)$ \'etant inversible et \`a coefficients dans $\Q(z)$, 
un tel $\rho$ se calcule de mani\`ere effective.

Nous montrons dans un premier temps comment déterminer si $\alpha$ est ou non un pôle de $f$.  
Observons d'abord que la fonction $f(z)$ est d\'efinie sur le disque $D(0,\rho)$. 
En effet, dans le cas contraire choisissons $\xi$ un p\^ole de module minimal pour la fonction $f$, de sorte que $|\xi|\leq \rho$. Par définition de $\rho$, le nombre $\xi$ ne serait pas un p\^ole de la matrice $A(z)$. L'\'equation \eqref{eq: systememinf} impliquerait alors que $\xi$ serait p\^ole d'une des fonctions $f(z^q),\ldots, f(z^{q^n})$, ce qui contredirait la minimalit\'e de $\xi$. 

On choisit maintenant un entier $l$ tel que $|\alpha^{q^l}| \leq \rho$. En itérant le système 
\ref{eq: systememinf}, on a : 
$$ 
\left(\begin{array}{c}1 \\ f(z) \\ \vdots \\ f(z^{q^{n-1}}) \end{array} \right)
= A_l(z)\left(\begin{array}{c}1 \\ f(z^{q^l}) \\ \vdots \\ f(z^{q^{l+n-1}}) \end{array} \right)
$$ 
o\`u $A_l(z)=A(z)\cdots A(z^{q^{l-1}})$. La minimalit\'e du syst\`eme \eqref{eq: systememinf} 
impliquent  de plus que les fonctions $1,f(z),\ldots,f(z^{q^{n-1}})$ sont lin\'eairement 
ind\'ependantes. D'apr\`es le th\'eor\`eme 1.10 de \cite{AF}, chaque p\^ole de $A_l(z)$ est 
p\^ole d'au moins une des fonctions $1,f(z),\ldots,f(z^{q^{n-1}})$. 
D'autre part, par choix de l'entier $l$, $\alpha$ n'est p\^ole d'aucune des fonctions $f(z^{q^l}),\ldots,f(z^{q^{l+n-1}})$. Par cons\'equent, $\alpha$ est un p\^ole de $f(z)$ si et seulement si 
c'est un p\^ole d'un des coefficients de la deuxième ligne de la matrice $A_l(z)$. 
Comme $A_l(z)$ se calcule explicitement, on peut 
déterminer de façon effective si la fonction 
$f$ est définie ou non au point $\alpha$. 

\medskip

 Supposons à présent que la fonction $f(z)$ est bien d\'efinie en $\alpha$. 
 
 Si $\alpha$ n'est pas une racine du d\'eterminant de $A_l(z)$, 
 alors c'est un point r\'egulier pour la matrice $A_l(z)$, par choix de l'entier $l$. 
 D'apr\`es le corollaire 1.5 de \cite{AF}, les fonctions $1,f(z),\ldots,f(z^{q^{n-1}})$ 
 \'etant lin\'eairement ind\'ependantes, le nombre $f(\alpha)$ est transcendant.

 Il reste à traiter le cas où $\alpha$ est une racine du d\'eterminant de $A_l(z)$. 
 Le nombre $f(\alpha)$ est alg\'ebrique si, et seulement si, il existe deux nombres alg\'ebriques 
$\omega_1$ et $\omega_2$, non tous nuls, tels que
$$
\omega_1 + \omega_2 f(\alpha) = 0 \,.
$$
D'apr\`es le th\'eor\`eme 1.9 de \cite{AF}, cela est \'equivalent \`a l'existence d'un vecteur non nul 
de la forme $(\omega_1,\omega_2,0,\ldots,0)$ dans le noyau à gauche de la matrice $A_l(\alpha)$. 
L'existence d'un tel vecteur peut se tester algorithmiquement.  
\end{proof}

Décrivons à présent l'algorithme \ref{algo: relationslineaires}. 

\begin{proof}[Description de l'algorithme \ref{algo: relationslineaires}]
Pour chaque fonction $f_i(z)$, $1\leq i\leq r$, on dispose d'un syst\`eme de la forme 
\eqref{eq: systeme}.
L'algorithme \ref{algosystemeinhomogene} nous permet de trouver le syst\`eme inhomog\`ene minimal associ\'e \`a chaque fonction $f_i(z)$, $1 \leq i\leq r$. Soit $\rho$ un r\'eel positif tel que les matrices des syst\`emes sont d\'efinies et inversibles sur le disque ferm\'e \'epoint\'e $D(0,\rho)^\star$, et $l$ un entier tel que $|\alpha^{q^l}| \leq \rho$. Comme dans la preuve de l'algorithme \ref{algo: transcendance}, on it\`ere $l$ fois chaque syst\`eme pour obtenir 
$$ 
\left(\begin{array}{c}1 \\ f_i(z) \\ \vdots \\ f_i(z^{q^{n_i-1}}) \end{array} \right)
= A_{i,l}(z)\left(\begin{array}{c}1 \\ f_i(z^{q^l}) \\ \vdots \\ f_i(z^{q^{l+n_i-1}}) \end{array} \right)\, 
$$  
puis déterminer si chaque fonction $f_i(z)$ est d\'efinie en $\alpha$. 

On supposera dans la suite que les fonctions $f_i(z)$ sont toutes d\'efinies en $\alpha$. 
Chaque fonction $f_i(z)$ étant également $q^l$-mahlérienne, l'algorithme \ref{algosystemeinhomogene} permet de déterminer l'équation inhomogène minimale 
associée : 
$$
p_{i,-1}(z) +p_{i,0}(z)f_i(z)+p_{i,1}(z)f_i(z^{q^l})+\cdots +f_i(z^{q^{l+n_i}}) =0\,.
$$
Ainsi, toutes les fonctions $f_i(z), f_i(z^{q^l}), \ldots, f_i(z^{q^{l+n_i}})$ sont d\'efinies au point 
$\alpha$. En mettant bout \`a bout les syst\`emes compagnons associés à ces équations, 
on obtient un syst\`eme diagonal par bloc de la forme : 
$$
\left( \begin{array}{ c }
     1 \\
     f_1(z) \\
     \vdots \\
     f_{1}(z^{q^{l+n_1-1}})\\
     1 \\
     f_2(z)\\
     \vdots \\
     f_{r}(z^{q^{l+n_r-1}})
  \end{array} \right)
  = 
  \left(\begin{array}{ccc} B_1(z) & & \\ & \ddots & \\ & & B_r(z) \end{array} \right)
  \left( \begin{array}{ c }
     1 \\
     f_1(z^{q^l}) \\
     \vdots \\
     f_{1}(z^{q^{l+n_1}})\\
     1 \\
     f_2(z^{q^l})\\
     \vdots \\
     f_{r}(z^{q^{l+n_r}})
  \end{array} \right)\,.
$$
Le th\'eor\`eme 6.1 de \cite{AF} nous permet d'obtenir une base $\mathcal B$ 
de l'espace ${\rm Rel}_{\bf k(z)}(1,f_1(z),\cdots,f_1(z^{q^{l+n_1-1}}),\ldots,f_r(z^{q^{l+n_r-1}}))$. 
Soit $S$ la codimension de cet espace et $s$ la dimension de l'espace vectoriel engendré par les fonctions $f_1(z),\ldots, f_r(z)$. On choisit, parmi les fonctions $f_1(z),\ldots,f_r(z)$, des fonctions $g_1(z),\ldots,g_s(z)$ lin\'eairement ind\'ependantes telles que 
$$
\left( \begin{array}{ c }
     f_1(z) \\
     \vdots \\
     f_r(z)
  \end{array} \right) = \Gamma(z) \left( \begin{array}{ c }
     g_1(z) \\
     \vdots \\
     g_s(z)
  \end{array} \right) 
$$ 
où la matrice $\Gamma(z)$ est d\'efinie au point $\alpha$. Un tel choix de 
$\Gamma(z)$ se calcule explicitement, comme expliqué ci-après. 
On construit à partir de $\mathcal B$ une base $\mathcal B'$ de  
${\rm Rel}_{\bf k(z)}(f_1(z),\cdots,f_r(z))$ et on 
consid\`ere un premier vecteur ${\bf q}_1(z):=(q_{1,1}(z),\ldots,q_{1,r}(z))$ 
dans $\mathcal B'$ de sorte que 
\begin{equation}
\label{eq: premiererelation}
q_{1,1}(z)f_1(z)+\cdots+q_{1,r}(z)f_r(z) = 0\ ,
\end{equation}
o\`u les $q_{1,i}(z)$ sont des polyn\^omes premiers entre eux, et on choisit un indice $i_1$ pour lequel $q_{1,i_1}(\alpha) \neq 0$. On a 
$$
f_{i_1}(z) = \sum_{i \neq i_1} \frac{-q_{1,i}(z)}{q_{1,i_1}(z)} f_i(z)\, .
$$
On choisit alors un second vecteur ${\bf q}_2(z)$ dans ${\mathcal B}'$. On peut toujours supposer que $q_{2,i_1}(z)=0$, quitte à remplacer ${\bf q}_2(z)$ par une combinaison linéaire 
de ${\bf q}_2(z)$ et ${\bf q}_1(z)$. 
Il vient 
$$
\sum_{i\neq i_1} q_{2,i}(z)f_i(z) = 0\ .
$$
On fixe ensuite $i_2$, tel que $q_{2,i_2}(\alpha) \neq 0$ et on \'ecrit
$$
f_{i_2}(z) = \sum_{i \neq i_1,i_2} \frac{-q_{2,i}(z)}{q_{2,i_2}(z)} f_i(z)\ .
$$
En itérant ce procédé, on obtient des entiers distincts $i_1,i_2,\ldots,i_{r-s}$ tels que :
\begin{equation}\label{eq: ik}
f_{i_k}(z) = \sum_{i \neq i_1,i_2,\ldots,i_{k}} \frac{-q_{k,i}(z)}{q_{k,i_k}(z)} f_i(z)\ 
\end{equation}
et $q_{k,i_k}(\alpha)\not=0$ pour tout $k$, $1\leq k \leq r-s$. 
On choisit $g_1,\ldots,g_s$ de sorte que 
$\{g_1,\ldots,g_s\}=\{f_i : i \not= i_1,i_2,\ldots,i_{r-s}\}$. 
Les équations \eqref{eq: ik} permettent d'exprimer chaque $f_{i_{k}}$ comme 
une combinaison linéaire définie en $\alpha$ des fonctions $g_1,\ldots,g_s$, d'où l'on tire 
$\Gamma(z)$.  

On a alors l'inclusion suivante entre l'ensemble ${\rm Rel}_{{\Q}}(f_1(\alpha),\ldots,f_r(\alpha))$ et l'ensemble ${\rm Rel}_{{\Q}}(g_1(\alpha),\ldots,g_n(\alpha))$ : 
\begin{equation}
\label{eq:equivalence_relations}
{\rm Rel}_{{\Q}}(f_1(\alpha),\ldots,f_r(\alpha))\cdot \Gamma(\alpha) 
\subset {\rm Rel}_{{\Q}}(g_1(\alpha),\ldots,g_s(\alpha)) \, .
\end{equation}
En appliquant l'algorithme \ref{algo:indep}, on compl\`ete les fonctions $g_1(z),\ldots,g_s(z)$ en un système 
$$
\left( \begin{array}{ c }
     g_1(z) \\
     \vdots \\
     g_S(z)
  \end{array} \right) = A(z)\left( \begin{array}{ c }
     g_1(z^{q^l}) \\
     \vdots \\
     g_S(z^{q^l})
  \end{array} \right)  
$$
dans lequel les fonctions $g_1(z),\ldots,g_S(z)$ sont lin\'eairement ind\'ependantes. Comme les fonctions $g_1,\ldots,g_S$ sont définies en $\alpha$, le théorème 1.10 de \cite{AF} implique que   
la matrice $A(z)$ est bien d\'efinie en $\alpha$.  
D'apr\`es le th\'eor\`eme 1.9 de \cite{AF}, on a l'\'egalit\'e
$$
{\rm Rel}_{{\Q}}(g_1(\alpha),\ldots,g_S(\alpha))
= {\ker}_g A(\alpha),
$$
où ${\ker}_g A(\alpha)$ désigne le noyau à gauche de $A(\alpha)$. 
En ne consid\'erant que les vecteurs de ${\rm Rel}_{{\Q}}(g_1(\alpha),\ldots,g_S(\alpha))$ 
nuls sur les $S-s$ derni\`eres coordonn\'ees, on peut d\'eterminer une base 
$\bmu_1,\ldots,\bmu_t$ de l'espace ${\rm Rel}_{{\Q}}(g_1(\alpha),\ldots,g_s(\alpha))$. 
On peut alors calculer de mani\`ere explicite une base de l'ensemble ${\rm Rel}_{{\Q}}(f_1(\alpha),\ldots,f_r(\alpha))$, en r\'esolvant dans $\Q^r$ les syst\`emes linéaires 
$$
(x_1,\ldots,x_r).\Gamma(\alpha)=\bmu_i\ \,,
$$
 ce qui termine cette démonstration. 
\end{proof}

\section{Détermination des solutions analytiques d'une \'equation mahl\'erienne}
\label{par:solutions}

Dans cette partie nous reprenons les travaux de Dumas \cite{Dumas}. Nous montrons comment calculer de mani\`ere effective, \`a partir des premiers coefficients d'une solution d'\'equation mahl\'erienne, des coefficients arbitrairement \'elev\'es de la fonction. Les d\'emonstrations fourniront au passage une m\'ethode pour d\'eterminer une base de solutions analytiques d'une \'equation mahl\'erienne. Nous abordons s\'epar\'ement le cas d'une \'equation de type \eqref{eq: mahler} et d'un  système de type \eqref{eq: systeme}. 

\'Etant donnée une \'equation de la forme \eqref{eq: mahler}, notons 
$\nu_i$ la valuation du polyn\^ome $p_i(z)$ \`a l'origine pour $0\leq i\leq n$. 
D\'efinissons alors l'entier $d$ par :  
\begin{equation}
\label{eq: entiercritique}
d:= \max \left\{\ \left\lfloor \frac{\nu_0 - \nu_i}{q^i -1} \right\rfloor, \; 1 \leq i\leq  n \right\}\,.
\end{equation}
De \cite[Th\'eor\`eme 5]{Dumas}, on tire le r\'esultat suivant, que nous red\'emontrons ici.

\begin{lem}
\label{Dumas}
Soit $f(z):= \sum_{k=0}^\infty f_k z^k$ une solution de l'équation \eqref{eq: mahler}.  
Alors, si $d<0$, cette solution est unique et l'équation permet de déterminer tous les coefficents 
$f_k$. De plus $f(z)$ est nulle si, et seulement si, $p_{-1}(z)=0$. 
Si  $d\geq 0$, les coefficients $f_k$, pour $k> d$, 
sont d\'etermin\'es de mani\`ere unique par les coefficients $f_k$, pour $k\leq d$. 
\end{lem}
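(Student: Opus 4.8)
The plan is to turn the functional equation \eqref{eq: mahler} into a linear recurrence on the Taylor coefficients $f_k$ and to determine, for each equation obtained by extracting a single power of $z$, the largest index among the coefficients $f_k$ that genuinely occurs. Write $f(z)=\sum_{k\geq 0}f_kz^k$ and $p_i(z)=\sum_{j\geq \nu_i}p_{i,j}z^j$ with $p_{i,\nu_i}\neq 0$ for $0\leq i\leq n$, and set $p_{i,m}:=0$ for $m<\nu_i$. Extracting the coefficient of $z^N$ in \eqref{eq: mahler} gives, for every $N\geq 0$,
\begin{equation*}
p_{-1,N}+\sum_{i=0}^n\sum_{k\geq 0}p_{i,N-kq^i}f_k=0\,.
\end{equation*}
Here the contribution of $p_0(z)f(z)$ involves the $f_k$ with $k\leq N-\nu_0$, the extremal index $k=N-\nu_0$ appearing with the nonzero coefficient $p_{0,\nu_0}$, whereas the contribution of $p_i(z)f(z^{q^i})$, for $i\geq 1$, involves only $f_k$ with $k\leq \lfloor (N-\nu_i)/q^i\rfloor$.

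The arithmetic heart of the argument is the following comparison. Writing $k_0:=N-\nu_0$ and using that $k_0$ is an integer, one has $k_0>\lfloor (N-\nu_i)/q^i\rfloor$ if and only if $k_0>(N-\nu_i)/q^i$, which after clearing denominators reads $(q^i-1)k_0>\nu_0-\nu_i$, i.e.\ $k_0>(\nu_0-\nu_i)/(q^i-1)$. Taking the conjunction over $1\leq i\leq n$ and invoking once more the integrality of $k_0$, this holds for every $i$ exactly when $k_0>d$, with $d$ as in \eqref{eq: entiercritique}. Thus, as soon as $N-\nu_0>d$, the index $k_0=N-\nu_0$ is the unique largest index occurring in the coefficient equation for $z^N$, and since $p_{0,\nu_0}\neq 0$ one may isolate
\begin{equation*}
f_{N-\nu_0}=-\frac{1}{p_{0,\nu_0}}\Bigl(p_{-1,N}+\sum_{(i,k)\neq (0,N-\nu_0)}p_{i,N-kq^i}f_k\Bigr)\,,
\end{equation*}
which expresses $f_{N-\nu_0}$ as an explicit linear combination of coefficients $f_k$ with $k<N-\nu_0$.

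Both cases of the lemma follow at once. If $d\geq 0$, letting $N$ range over the integers with $N-\nu_0>d$ shows that each $f_k$ with $k>d$ is determined by the $f_k$ with $k\leq d$, as claimed. If $d<0$, the same holds for every $k_0\geq 0$; in particular the base case $N=\nu_0$ forces $f_0=-p_{-1,\nu_0}/p_{0,\nu_0}$, and induction on $k$ shows that all coefficients are uniquely determined, so the solution is unique and computable. The final equivalence is a consequence of this uniqueness: if $p_{-1}(z)=0$ the zero function solves \eqref{eq: mahler} and is therefore the only solution, while conversely substituting $f\equiv 0$ into \eqref{eq: mahler} leaves $p_{-1}(z)=0$.

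The step I expect to require the most care is the passage between the floor functions. One must use twice the elementary fact that for an integer $a$ and a real $b$ one has $a>\lfloor b\rfloor$ if and only if $a>b$: first to replace each bound $\lfloor (N-\nu_i)/q^i\rfloor$ by the rational quantity $(\nu_0-\nu_i)/(q^i-1)$, and then to recognise the conjunction of these rational inequalities as the single threshold $k_0>d$ built from the maximum of the floors in \eqref{eq: entiercritique}. Once this equivalence is clean, the remainder is routine bookkeeping on the recurrence.
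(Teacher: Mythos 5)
Your proof is correct and takes essentially the same route as the paper's: both turn \eqref{eq: mahler} into the coefficient recurrence and use the same integer-versus-floor arithmetic to show that once $k_0>d$ the index $k_0$ strictly dominates all indices coming from the terms $p_i(z)f(z^{q^i})$, $i\geq 1$, so that $f_{k_0}$ is determined by the lower coefficients. The only cosmetic difference is normalization: the paper first divides by $p_0(z)$ and works with the Laurent coefficients of $-p_i(z)/p_0(z)$, whereas you keep the polynomial identity and invert only the leading scalar $p_{0,\nu_0}$ --- the two recurrences are equivalent, and your handling of the $d<0$ case and of the equivalence with $p_{-1}(z)=0$ matches the paper's.
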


\begin{proof}
Si $k$ appartient à $\mathbb Q\setminus \mathbb N$, on pose $f_k := 0$. 
Dans l'\'equation \eqref{eq: mahler}, on isole $f(z)$
\begin{equation}
\label{equationremani\'ee}
f(z) = - \frac{p_{-1}(z)}{p_0(z)} - \frac{p_{1}(z)}{p_0(z)}f(z^q) - \cdots - \frac{p_{-n}(z)}{p_0(z)}f(z^{q^n}).
\end{equation} 
Consid\'erons alors le d\'eveloppement en s\'erie de Laurent des fractions rationnelles 
$-p_{i}(z)/p_0(z)$, $-1\leq i \leq n$, et notons : 
$$
-\frac{p_{i}(z)}{p_0(z)} := \sum_{k \geq \nu_i-\nu_0} p_{i,k} z^k \,.
$$
Pour tout entier $k\geq 0$, 
l'\'etude du terme de valuation $k$ dans l'\'equation \eqref{equationremani\'ee} donne la relation suivante :
\begin{equation}
\label{equationcoeff}
f_k = p_{-1,k} + \sum_{i=1}^n \sum_{j= \nu_i-\nu_0}^k p_{i,j}f_{\frac{k-j}{q^i}} \, .
\end{equation}
Si $k > d$, alors
$$ 
k > \frac{\nu_0 - \nu_i}{q^i -1}
$$
pour chaque $i$, $1\leq i\leq n$. On v\'erifie que cela implique, 
pour tout $j \geq \nu_i - \nu_0$ que 
$$ 
\frac{k-j}{q^i} < k \,.
$$
On en d\'eduit que le coefficient $f_k$ est d\'etermin\'e de mani\`ere unique par 
les coefficients $f_l$ pour $l<k$. 

En particulier, si $d<0$,  les coeffcients $f_k$ sont tous uniquement déterminés 
par \eqref{equationcoeff}. 
Cette dernière implique de plus que ces coefficients sont tous nuls si, 
et seulement si, $p_{-1,k}=0$ pour tout $k$. 
\end{proof}

\'Etant donn\'ee une \'equation de la forme \eqref{eq: mahler}, le lemme \ref{Dumas} nous permet 
de d\'eterminer l'espace affine des solutions analytiques. 
Si $d<0$, cet espace est réduit à une unique fonction, laquelle est entièrement déterminée par  \eqref{equationcoeff}. 
Si $d\geq 0$, il suffit de r\'esoudre 
dans $\k^{d+1}$ les $d+1$ \'equations affines \eqref{equationcoeff}, pour $0\leq k\leq  d$. 
On déduit immédiatement le r\'esultat suivant.

\begin{coro}
\label{dimensionepsacesolutions}
\'Etant donn\'e une \'equation de type \eqref{eq: mahler}, notons $\nu_0$ la 
valuation du polyn\^ome $p_0(z)$. La dimension de l'espace affine des solutions analytiques  
de cette équation est
au plus $1+ \frac{\nu_0}{q-1}$. 
En particulier, on peut la majorer 
ind\'ependamment de $n$.
\end{coro}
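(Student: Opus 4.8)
The plan is to read the bound off directly from Lemma~\ref{Dumas} and its surrounding discussion, and then to estimate the integer $d$ introduced in \eqref{eq: entiercritique}. The entire content of the corollary is already packaged in the definition of $d$, so the work amounts to an elementary inequality.

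First I would recall the structure of the solution space provided by Lemma~\ref{Dumas}. When $d<0$, the lemma asserts that an analytic solution, if it exists, is unique, so the affine space of solutions has dimension at most $0$; since $\nu_0\geq 0$, this already lies below $1+\frac{\nu_0}{q-1}$. When $d\geq 0$, the lemma states that every coefficient $f_k$ with $k>d$ is uniquely determined by $f_0,\ldots,f_d$ through the recursion \eqref{equationcoeff}. Hence an analytic solution is entirely prescribed by its truncation $(f_0,\ldots,f_d)\in\k^{d+1}$, and the space of solutions embeds into the affine subspace of $\k^{d+1}$ cut out by the $d+1$ relations \eqref{equationcoeff} for $0\leq k\leq d$. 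In particular its dimension is at most $d+1$.

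The only remaining step is to bound $d$. Using that each $\nu_i$ is a valuation, hence $\nu_i\geq 0$, and that $q^i-1\geq q-1$ for every $i\geq 1$, I would estimate, for each index $i$ with $1\leq i\leq n$,
$$
\left\lfloor \frac{\nu_0-\nu_i}{q^i-1} \right\rfloor \;\leq\; \frac{\nu_0-\nu_i}{q^i-1} \;\leq\; \frac{\nu_0}{q^i-1} \;\leq\; \frac{\nu_0}{q-1}\,.
$$
Taking the maximum over $i$ yields $d\leq \frac{\nu_0}{q-1}$, whence the dimension is at most $d+1\leq 1+\frac{\nu_0}{q-1}$. Since this bound involves neither $n$ nor the individual valuations $\nu_i$, the announced uniformity in $n$ is immediate.

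I do not anticipate any genuine obstacle: the corollary is a one-line consequence of Lemma~\ref{Dumas}. The single point deserving a moment of care is the degenerate case $d<0$, in which there are no free coefficients; it must be dispatched separately so that the statement stays correct for small $\nu_0$, in particular when $\nu_0=0$, where the asserted bound reads exactly $1$ and indeed forces $d\leq 0$.
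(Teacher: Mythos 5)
Your proof is correct and takes essentially the same route as the paper: the paper likewise deduces the corollary directly from Lemme~\ref{Dumas}, noting that the solutions are parametrized by the $d+1$ affine equations \eqref{equationcoeff} in $\k^{d+1}$ (and that the case $d<0$ gives a single solution), the estimate $d \leq \frac{\nu_0}{q-1}$ being left implicit there. Your explicit verification of that estimate, via $\nu_i \geq 0$ and $q^i - 1 \geq q-1$, and your separate treatment of $d<0$ simply spell out what the paper declares immediate.
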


Pour les systèmes de type \eqref{eq: systeme}, nous obtenons un r\'esultat similaire.

\begin{lem}
\label{conditions initiales}
Soit $\f(z) = \sum_{k=0}^\infty \f_k z^k$ un vecteur non nul de fonctions analytiques 
solution d'un système de type \eqref{eq: systeme}.  
Soit $\nu$ le minimum des valuations en $z=0$ des coefficients de la matrice $A(z)$. 
Alors, on a $\nu \leq 0$ et les vecteurs de coefficients  $\f_k$ pour $k>\left\lfloor \frac{-\nu}{q-1} \right\rfloor$ sont 
d\'etermin\'es de mani\`ere unique par les vecteurs de coefficients 
$\f_k$ pour $k\leq \left\lfloor \frac{-\nu}{q-1} \right\rfloor$. 
\end{lem}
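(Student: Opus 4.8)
Le plan est de calquer la preuve du lemme \ref{Dumas} sur le cas vectoriel, en identifiant les coefficients dans l'égalité $\f(z)=A(z)\f(z^q)$. J'écrirais $A(z)=\sum_{k\geq \nu}A_kz^k$, où $A_k$ désigne la matrice (à coefficients algébriques) des coefficients de $z^k$ dans $A(z)$, la valuation $\nu$ étant par définition le plus petit indice $k$ pour lequel $A_k\neq 0$. En reportant $\f(z^q)=\sum_{l\geq 0}\f_lz^{ql}$ et en extrayant le coefficient de $z^N$, on obtient la relation de récurrence
\begin{equation*}
\f_N=\sum_{\substack{l\geq 0\\ N-ql\geq \nu}}A_{N-ql}\,\f_l\,,
\end{equation*}
sur laquelle se lit l'ensemble de l'énoncé.

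Je montrerais d'abord que $\nu\leq 0$ en exploitant l'hypothèse que $\f$ est non nul. Soit $m\geq 0$ la valuation du vecteur $\f(z)$, c'est-à-dire le plus petit indice tel que $\f_m\neq 0$ ; elle est positive ou nulle puisque les $f_i$ sont analytiques à l'origine. Le membre de gauche de $\f(z)=A(z)\f(z^q)$ est de valuation $m$, tandis que chaque composante du membre de droite est de valuation au moins $\nu+qm$, chaque coefficient de $A(z)$ étant de valuation au moins $\nu$ et $\f(z^q)$ de valuation $qm$. On en déduit $m\geq \nu+qm$, soit $\nu\leq -(q-1)m\leq 0$. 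En particulier $\left\lfloor \frac{-\nu}{q-1}\right\rfloor\geq 0$, ce qui garantit que l'ensemble des conditions initiales est non vide.

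Il resterait à vérifier que, dès que $N>\left\lfloor \frac{-\nu}{q-1}\right\rfloor$, tous les indices $l$ apparaissant dans la somme précédente satisfont $l<N$, de sorte que $\f_N$ est alors entièrement déterminé par les $\f_l$ avec $l<N$ ; une récurrence immédiate achèvera la preuve. La condition $N-ql\geq \nu$ impose $l\leq (N-\nu)/q$, et il suffit d'observer que l'inégalité $N>-\nu/(q-1)$ équivaut exactement à $(N-\nu)/q<N$, puis que $l$ étant entier cela force $l<N$. Le point principal, de nature purement arithmétique, est précisément cette identification du seuil critique $\left\lfloor \frac{-\nu}{q-1}\right\rfloor$ : tout repose sur l'équivalence, pour $N$ entier, entre $N>-\nu/(q-1)$ et $N>\left\lfloor \frac{-\nu}{q-1}\right\rfloor$, le reste n'étant qu'un suivi d'indices analogue à celui du lemme \ref{Dumas}.
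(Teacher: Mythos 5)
Votre démonstration est correcte et suit pour l'essentiel la preuve du papier : vous extrayez comme lui le coefficient de $z^N$ dans $\f(z)=A(z)\f(z^q)$ (votre récurrence est exactement celle de l'article, réindexée par $l=(k-r)/q$), et vous identifiez le seuil critique par le même calcul, l'inégalité $N>-\nu/(q-1)$ équivalant à $(N-\nu)/q<N$. Votre argument de valuation pour établir $\nu\leq 0$ (comparaison $m\geq \nu+qm$) n'est qu'une reformulation, légèrement plus quantitative, de la contradiction que le papier tire de l'indice minimal $k_0$ tel que $\f_{k_0}\neq 0$.
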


\begin{proof}
Les coefficients de la matrice $A(z)$ sont des fractions rationnelles. 
On peut donc consid\'erer le d\'eveloppement en s\'erie de Laurent de la matrice $A(z)$,
$$
A(z) := \sum_{k\geq \nu} A_k z^k, \qquad A_k \in \mathcal{M}_n(\k)
$$
o\`u $A_\nu$ est une matrice non nulle. 
La relation matricielle entre $\f(z)$ et $\f(z^q)$ implique les relations suivantes 
entre les coefficients 
\begin{equation}
\label{eq: recurrence}
\f_k = \sum_{r=\nu}^k A_r \f_{\frac{k-r}{q}} \,,
\end{equation}
o\`u l'on convient que $\f_{\frac{k-r}{q}}=0$ si $q$ ne divise pas $k-r$. 
Si $\nu >0$, alors $\f$ est identiquement nulle. En effet, en considérant $k_0$ le plus petit entier tel 
que $\f_{k_0}\not=0$, l'équation \eqref{eq: recurrence} serait contradictoire.  
Supposons maintenant que $\nu \leq 0$.   
Si l'entier $k$ est strictement sup\'erieur \`a $\frac{-\nu}{q-1}$, 
alors $kq > k - \nu$ et pour tout $r \geq \nu$, 
$$
k > \frac{k-r}{q}
$$
et l'équation \eqref{eq: recurrence} permet de définir de manière unique le vecteur $\f_k$ 
en fonctions des vecteurs $\f_l$ avec $0\leq l<k$.
\end{proof}


\end{document}